\theoremstyle{theorem}
\newtheorem{theorem}{Theorem}
\newtheorem*{lemma*}{Lemma}
\newtheorem{corollary}[theorem]{Corollary}
\theoremstyle{definition}
\newcommand\fm{\mathfrak{m}}
\newcommand\fl{\mathfrak{l}}
\newcommand{\pl}[1]{\textbf{\small{plane}}\,{#1}}
\DeclareMathOperator{\bis}{bis}
\title{On Euler's rotation theorem}
\author[P.~Gothen and A.~Guedes~de~Oliveira]{\vspace{-0.5cm}Peter Gothen and Ant\'onio Guedes de Oliveira}
\address{CMUP and Department of Mathematics, Faculty of Sciences,
  University of Porto, 4169-007 Porto, Portugal}
\email{$\{\text{pbgothen,agoliv}\}$@fc.up.pt}
\date{07/09/2021}
\begin{document}

\maketitle

\begin{abstract}
It is well known that a rigid motion of the Euclidean plane can be
written as the composition of at most three reflections.
It is perhaps not so widely known that a rigid motion of $n$-dimensional Euclidean space can be written as the composition of at most $n + 1$ reflections.

The purpose of the present article is, firstly, to present a natural proof of this result in dimension $3$ by explicitly
constructing a suitable sequence of reflections, and, secondly, to show how a careful analysis of this construction provides a quick and pleasant geometric
path to Euler's rotation theorem, and to the complete
classification of rigid motions of space, whether orientation preserving or not.
We believe that our presentation will highlight the elementary nature of the results and hope that readers, perhaps especially those more familiar with the usual linear algebra approach, will appreciate the simplicity and geometric flavour of the arguments.
\end{abstract}

\section{On Euler's rotation theorem}

\edef\myindent{\the\parindent}
\edef\myparskip{\the\parskip}

\subsection{Decomposition of $3$-isometries}

It is well known that a rigid motion of the Euclidean plane can be
written as the composition of at most three reflections.  It is
perhaps not so widely known that a rigid motion of
  $n$-dimensional Euclidean space can be written as the composition of
  at most $n+1$ reflections.

The purpose of the present article is, firstly, to present a natural
proof of this result in dimension $3$ by explicitly
constructing a suitable sequence of reflections~\footnote{We
    opted to state and prove this theorem in three dimensions, but
    note that this proof can be easily adapted to fit any number of
    dimensions.} and, secondly, to show how a careful
analysis of this construction provides a quick and pleasant geometric
path to Euler's rotation theorem, and to the complete
classification of rigid motions of space, whether orientation preserving or not.

We believe that our presentation will highlight the elementary nature
of the results and hope that readers, perhaps especially those more
familiar with the usual linear algebra approach, will appreciate the
simplicity and geometric flavour of the arguments.

In view of the topic of the article any list of references is bound to
be inadequate, so we provide just two: our article \cite{GO} which
deals with the case of the plane, and the article \cite{PPR} which
gives a thorough discussion of Euler's theorem and, among
several proofs, includes Euler's original one and a modern one using
linear algebra.

Let $\pi=\pl{ABC}$ be the plane through three noncollinear points,
$A$, $B$ and $C$. We write $\sigma^{ABC}$ or $\sigma^\pi$ for the
reflection in $\pi$. If $A\neq B$ we write
$\bis \,\overline{\!AB}$ for the plane through the midpoint of
$\overline{AB}$ perpendicular to this line, which is formed by the
points at equal distance to $A$ and $B$. In particular, if
$A\neq B$, then $\sigma^{\bis\overline{\!AB}}(A)=B$ and vice-versa.

\begin{theorem}\label{TFI} Given points $A$, $A'$, $B$, $B'$, $C$ and $C'$ in space
  such that $A$, $B$ and $C$ are noncollinear and $|AB|=|A'B'|$,
  $|AC|=|A'C'|$ and $|BC|=|B'C'|$, there exist exactly two rigid
  motions sending $A$ to $A'$, $B$ to $B'$ and $C$ to $C'$. The first
  of these can be written as the composition of three reflections. The
  second one is obtained composing the first one with the reflection
  in the plane $A'B'C'$.
\end{theorem}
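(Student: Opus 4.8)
The plan is to prove existence by the explicit three‑reflection construction promised in the introduction, and uniqueness by the fact (which is really the heart of the matter) that a rigid motion fixing three noncollinear points is either the identity or the reflection in the plane through them.

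For existence I would build the motion in three stages, fixing one more image point at each stage. If $A\neq A'$ set $\sigma_1=\sigma^{\bis\,\overline{AA'}}$, so $\sigma_1(A)=A'$, and put $B_1=\sigma_1(B)$, $C_1=\sigma_1(C)$ (if $A=A'$ take $\sigma_1=\mathrm{id}$, $B_1=B$, $C_1=C$). Since $\sigma_1$ is an isometry, $|A'B_1|=|AB|=|A'B'|$ and $|A'C_1|=|A'C'|$. As $A'$ is then equidistant from $B_1$ and $B'$, the plane $\bis\,\overline{B_1B'}$ contains $A'$, so reflecting in it (or taking $\mathrm{id}$ when $B_1=B'$) gives $\sigma_2$ with $\sigma_2(A')=A'$ and $\sigma_2(B_1)=B'$; with $C_2=\sigma_2(C_1)$ one checks $|A'C_2|=|A'C'|$ and $|B'C_2|=|B_1C_1|=|BC|=|B'C'|$. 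Finally $A'\neq B'$ (else $A=B$, against noncollinearity) and both lie in $\bis\,\overline{C_2C'}$, so reflecting in that plane (or $\mathrm{id}$ when $C_2=C'$) fixes $A'$ and $B'$ and sends $C_2$ to $C'$. Then $\phi=\sigma_3\sigma_2\sigma_1$ sends $A,B,C$ to $A',B',C'$, and so does $\sigma^{A'B'C'}\circ\phi$, since $\sigma^{A'B'C'}$ fixes $A',B',C'$. Exactly one of these two composites is orientation reversing; that one is a product of an odd number ($1$ or $3$) of nontrivial reflections, and a single reflection $\sigma^\pi$ can be padded to three via $\sigma^\pi\sigma^{\pi'}\sigma^{\pi'}$, so it is the "first" motion of the statement, and the other is its composition with $\sigma^{A'B'C'}$.

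For uniqueness, if $\rho$ and $\rho'$ both send $A,B,C$ to $A',B',C'$ then $\psi=\rho^{-1}\rho'$ fixes $A,B,C$. Since $\psi$ carries the plane $ABC$ to the plane through $\psi(A),\psi(B),\psi(C)=A,B,C$, it maps that plane to itself, and $\psi$ restricted to it is a plane isometry fixing three noncollinear points — hence the identity on it (it fixes the line $AB$ pointwise, so is $\mathrm{id}$ or the reflection in $AB$, and fixing $C$ rules out the reflection). Thus $\psi$ fixes the plane $ABC$ pointwise. If $\psi$ also fixes some point off that plane it fixes an affine frame of space, so $\psi=\mathrm{id}$; otherwise, for $Q$ off the plane, $\psi(Q)$ has the same distances to $A,B,C$ as $Q$, hence $\psi(Q)\in\{Q,\sigma^{ABC}(Q)\}$, forcing $\psi(Q)=\sigma^{ABC}(Q)$, and applying the previous case to $\sigma^{ABC}\psi$ (which fixes the plane and this $Q$) gives $\psi=\sigma^{ABC}$. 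So $\rho'\in\{\rho,\ \rho\circ\sigma^{ABC}\}$; using $\rho\,\sigma^{ABC}\rho^{-1}=\sigma^{\rho(ABC)}=\sigma^{A'B'C'}$ we get $\rho\circ\sigma^{ABC}=\sigma^{A'B'C'}\circ\rho$, which is exactly the relationship claimed, and the two motions are distinct because $\sigma^{ABC}\neq\mathrm{id}$.

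The main obstacle is not a single deep step but care in two places. First, the bookkeeping of the degenerate cases in the construction — when some $\sigma_i$ collapses to the identity — together with the parity argument that identifies which of the two motions is genuinely a product of three reflections. Second, the classification of space isometries fixing a plane pointwise, where one must exclude "mixed" behaviour (fixing some off‑plane points while reflecting others); this is precisely what the affine‑frame observation above rules out.
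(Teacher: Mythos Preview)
Your proof is correct and follows the same essential strategy as the paper --- the three-stage bisector construction for existence, and the observation that any two solutions differ by an isometry fixing $A,B,C$ --- but there are two genuine differences worth noting.

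First, in the degenerate cases of the construction you replace the undefined bisector reflection by the identity and then recover the ``three reflections'' claim afterwards via a parity argument and the padding trick $\sigma^\pi=\sigma^\pi\sigma^{\pi'}\sigma^{\pi'}$. The paper instead insists on always producing three honest reflections: when $A=A'$ it takes $\alpha=\pl{ABC}$, when $B^*=B'$ it takes $\beta$ to be a plane through $A'$ and $B'$, and when $C^*=C'$ it takes $\gamma=\pl{A'B'C'}$. The paper explicitly remarks (in a footnote) that your shortcut would work here, but that keeping the specific three-reflection sequence is essential for the subsequent proof of Euler's theorem, where the concrete planes $\alpha,\beta,\gamma$ are analysed. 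So your version is tidier for Theorem~\ref{TFI} in isolation, but the paper's version is load-bearing for what follows.

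Second, for uniqueness the paper argues in one line that there can be no third motion, since an isometry is determined by its values on four non-coplanar points. You instead prove the finer lemma that a rigid motion fixing three noncollinear points is either the identity or $\sigma^{ABC}$, by first pinning down the plane pointwise and then handling off-plane points. Your argument is more self-contained (and makes the dichotomy explicit), while the paper's is terser but relies on the reader supplying the four-point determination fact; the two are equivalent in content.
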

\begin{proof}
We start by noting that the location of any point $P$ is determined
  by its distances to any four given non-coplanar points: indeed if
  $Q\neq P$ had the same distances to the four given points, then
  these would belong to $\bis \,\overline{\!PQ}$ and thus be
  coplanar. It follows that a motion is determined by its action on
  any four non-coplanar points and, therefore, there exists no third
  motion with the stated properties.

Our proof now proceeds by first constructing a rigid motion $i$, written
as the composition of three reflections and satisfying $i(A)=A'$,
$i(B)=B'$ and $i(C)=C'$. The second motion will then be $j:=\sigma^{A'\!B'\!C'}\circ i \neq i$.

In the generic situation, when $A\neq A'$, we define $\alpha=\bis
\,\overline{\!AA'}$, so that $A'=\sigma^\alpha(A)$.

Next, supposing that $B^*:=\sigma^\alpha(B)$ is distinct from $B'$, we
define $\beta=\bis \,\overline{\!B^* B'}$, so that $B'=\sigma^\beta\circ\sigma^\alpha(B)$. The key point is now to
note that $\sigma^\beta(A')=A'$, because
$$|A'B^*  |=|\sigma^\alpha(A)\;\sigma^\alpha(B)|=|AB|=|A'B'|\,.$$
Hence $\sigma^\beta\circ \sigma^\alpha$ sends $A$ to $A'$ and $B$ to
$B'$. 

The final step is completely analogous. We define
$C^*=\sigma^\beta\circ\sigma^\alpha(C)$,
$\gamma=\bis\overline{C^*C'}$ (here supposing
$C^*\neq C'$). Again, $\sigma^\gamma(A')=A'$ and
$\sigma^\gamma(B')=B'$, because
\begin{align*}
&|A'C^*  |=|\sigma^\beta\!\circ\!\sigma^\alpha(A)\ \sigma^\beta\!\circ\!\sigma^\alpha(C)|=|AC|=|A'C'|\\
\intertext{ and similarly }
&|B'C^*  |=|\sigma^\beta\!\circ\!\sigma^\alpha(B)\ \sigma^\beta\!\circ\!\sigma^\alpha(C)|=|BC|=|B'C'|\,.
\end{align*}
Hence, setting $i=\sigma^\gamma\circ\sigma^\beta\circ\sigma^\alpha$,
we have $i(A)=A'$, $i(B)=B'$ and $i(C)=C'$.

Now, if $A=A'$ then we may instead define $\alpha=\pl{A'\!BC}$, if
$B^*=B'$ then we define $\beta=\sigma^\alpha(\pl{ABC})=\pl{A'\!B'\!\sigma^\alpha(C)}$, and
if $C^*=C'$ then we define $\gamma=\sigma^\beta\circ\sigma^\alpha(\pl ABC)=\pl{A'\!B'C'}$.  In all these
cases~\footnote{In each of the cases of coincidence, if we were to
  instead simply omit the respective reflection from our sequence, we
  would still obtain a motion with the desired properties. The
  reason why we do not do so, is that having the specific
  sequence of reflections will be essential in our proof of
  Theorem~\ref{theor:euler}.}, one still has $i(A)=A'$, $i(B)=B'$ and
$i(C)=C'$. Note that if $A=A'$, $B=B'$ and $C=C'$ then
$i=\sigma^{ABC}$.
\end{proof}

\subsection{Euler's rotation theorem}
Recall that a rotation about a line $\ell$
  is a  composition $\rho=\sigma^{\pi_2}\circ\sigma^{\pi_1}$ of
reflections, where the planes $\pi_1$ and $\pi_2$ intersect along the line
$\ell$, forming an angle which is half that of the rotation.

\begin{theorem}
\label{theor:euler}
Let $\fm$ be a rigid motion in space with a fixed point, $C$, and suppose $\fm$ is not the identity.
\begin{itemize}
\item{\emph{(Euler)}} If $\fm$ is an orientation preserving isometry then $\fm$ is a rotation about a line through $C$.
\item If $\fm$ does not preserve orientations then $\fm$ is either an
  inversion in $C$, a reflection in a plane through $C$, or a
  \emph{rotary reflection}~\footnote{Note that an inversion is
    the special case of a rotary reflection corresponding to the
    composition of reflections in 3 perpendicular planes.}, a
  reflection in a plane $\pi$ through $C$ followed (or
    preceded) by a rotation about a line through $C$ perpendicular to
  $\pi$.
\end{itemize}
\end{theorem}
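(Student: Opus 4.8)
The plan is to deduce the classification from Theorem~\ref{TFI} together with one elementary fact, the three--dimensional form of the two--reflection lemma: if two distinct planes meet in a line $\ell$, then the composition of the reflections in them is the rotation about $\ell$ through twice the dihedral angle, and conversely every rotation about $\ell$ is such a composition, with one of the two planes prescribable arbitrarily among the planes containing $\ell$ (restricting to a plane perpendicular to $\ell$ reduces this to the planar case of \cite{GO}). I will also use that a composition of $n$ reflections preserves orientation exactly when $n$ is even, and that the inversion $\iota_C$ in the point $C$ is an involution that reverses orientation and satisfies $\iota_C=\sigma^\pi\circ r$ for every plane $\pi$ through $C$, where $r$ is the half--turn about the line through $C$ perpendicular to $\pi$.

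First I would fix points $A$ and $B$ so that $A$, $B$, $C$ are noncollinear and put $A'=\fm(A)$, $B'=\fm(B)$, $C'=\fm(C)=C$. Theorem~\ref{TFI} then applies, so $\fm$ is the motion $i=\sigma^\gamma\circ\sigma^\beta\circ\sigma^\alpha$ built there, or $j=\sigma^{A'\!B'\!C'}\circ i$. The key point is that all of the planes $\alpha$, $\beta$, $\gamma$ and $A'B'C'$ pass through $C$: since $|CA|=|C'A'|=|CA'|$, the point $C$ lies on $\alpha=\bis\overline{AA'}$; as $\sigma^\alpha$ then fixes $C$, we get $|CB^*|=|CB|=|CB'|$, so $C\in\beta$, and likewise $C\in\gamma$; finally $C=C'\in A'B'C'$; and in the degenerate cases the alternative choices of $\alpha,\beta,\gamma$ in the proof of Theorem~\ref{TFI} are visibly planes through $C$. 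Since $i$ is a product of three reflections and $j$ of four, $i$ reverses and $j$ preserves orientation, so $\fm=j$ in the first part of the theorem and $\fm=i$ in the second.

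For Euler's statement, then, $\fm=j=(\sigma^{A'\!B'\!C'}\circ\sigma^\gamma)\circ(\sigma^\beta\circ\sigma^\alpha)$, a product of two maps each of which is the identity or, by the two--reflection fact, a rotation about a line through $C$. If both are the identity, $\fm=\mathrm{id}$, which is excluded; if exactly one is a rotation, $\fm$ is that rotation; otherwise $\fm=\rho_2\circ\rho_1$ with $\rho_k$ a rotation about a line $\ell_k$ through $C$. If $\ell_1=\ell_2$ we are done; if not, $\ell_1$ and $\ell_2$ span a plane $\mu\ni C$, and the two--reflection fact lets me write $\rho_1=\sigma^\mu\circ\sigma^{\pi_1}$ and $\rho_2=\sigma^{\pi_2}\circ\sigma^\mu$ with $\pi_1\supset\ell_1$ and $\pi_2\supset\ell_2$; hence $\fm=\sigma^{\pi_2}\circ\sigma^{\pi_1}$ with $\pi_1,\pi_2\ni C$, and since $\fm\neq\mathrm{id}$ these two planes are distinct, meet in a line through $C$, and $\fm$ is the rotation about that line.

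For the second statement, $\fm$ reverses orientation, so $\iota_C\circ\fm$ preserves it and fixes $C$; by the part just proved it is the identity — whence $\fm=\iota_C$, the inversion in $C$ — or a rotation $\rho$ about a line $\ell\ni C$. In the latter case, with $\pi$ the plane through $C$ perpendicular to $\ell$ and $r$ the half--turn about $\ell$, we have $\fm=\iota_C\circ\rho=\sigma^\pi\circ(r\circ\rho)$, and $r\circ\rho$ is again a rotation about $\ell$ (the two rotations commute): if it is trivial, $\fm=\sigma^\pi$ is a reflection in a plane through $C$, and if not, $\fm$ is the rotary reflection determined by $\pi$ and $\ell$, since $\sigma^\pi$ and the rotation about $\ell$ commute. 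I expect the only genuine content — the main obstacle — to be the bookkeeping in the second paragraph verifying that every plane produced by Theorem~\ref{TFI} contains $C$ (so that we never leave the maps fixing $C$); once that is in place the classification falls out by parity, by the ``slide a shared plane through the pencil'' manoeuvre for two rotations with concurrent axes, and by peeling off $\iota_C$.
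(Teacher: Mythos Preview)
Your argument is correct but follows a genuinely different path from the paper's. The paper exploits a clever choice of the points fed into Theorem~\ref{TFI}: it takes $B:=\fm(A)$ (so that, in the notation of that theorem, $A'=B$), whence $\alpha=\bis\overline{AB}$ forces $B^*=\sigma^\alpha(B)=A$; one then checks that $\gamma=\pl{A'B'C'}$, so $j=\sigma^{A'B'C'}\circ i$ collapses directly to $\fh=\sigma^\beta\circ\sigma^\alpha$, already a product of two reflections in planes through $C$ --- a rotation with axis $\alpha\cap\beta$ visible at once, no further reduction needed. The orientation-reversing $i$ is handled in the same breath: either $B'=A$, giving $\gamma=\beta$ and $i=\sigma^\alpha$, or $i=\sigma^\gamma\circ\fh$ with $\gamma\perp(\alpha\cap\beta)$, a rotary reflection; the inversion arises as the preliminary special case where $A,\fm(A),C$ are collinear for every $A$. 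You instead pick $A,B$ generically, obtain $j$ as a product of four reflections in planes through $C$, and reduce via the ``slide the shared plane through the pencil'' manoeuvre for two rotations with concurrent axes; for the reversing case you peel off $\iota_C$ and invoke the preserving case. Your route uses portable, standard tools (the pencil slide, the $\iota_C=\sigma^\pi\circ r$ decomposition) and requires no special insight about which points to feed Theorem~\ref{TFI}; the paper's route buys a shorter argument and an explicit description of the axis, at the cost of the initial trick $B=\fm(A)$.
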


\begin{proof}
Let $A$ be such that $B=\fm(A)\neq A$. 
Since $|AC|=|BC|$, if  for every point $A$ the  points $A$, $B$ and $C$
are collinear then for every point $A$, $C$ is the midpoint of
$\overline{AB}$. Hence, $\fm$ is the inversion in $C$.
Thus we may assume that $A$, $B$ and $C$ are non-collinear.
\\[-7.5pt]

\noindent
\begin{minipage}{.6\textwidth}
\setlength{\parindent}{\myindent}
\indent
Let us construct $i$ and $j$ as defined in the proof of
Theorem~\ref{TFI}, using the points $A$, $B$ and $C$, and their
images $A'=B$, $B'$ and $C'=C$ under $\fm$. Since $C$ is fixed by $\fm$, we
have $|AC|=|BC|=|B'C|$.
Since $\alpha=\bis\overline{AB}$ we have
$\sigma^\alpha(C)=C$ and, moreover, $B^*=\sigma^\alpha(B) = A$.
We can now see that
$C\in\beta$: in the case $A\neq B'$ because $\beta=\bis\overline{AB'}$
and $|AC| =|B'C|$, and in the case $B'=A$ because $\beta=\pl A'B'C$. It follows that
$C^*=\sigma^\beta\circ\sigma^\alpha(C)=C$. 
\end{minipage}\hspace{.2cm}
\begin{minipage}{.35\textwidth}
\centering
\begin{tikzpicture}[scale=.225]
\draw[fill] (0,0) node[red] {\includegraphics[width=4.5cm]{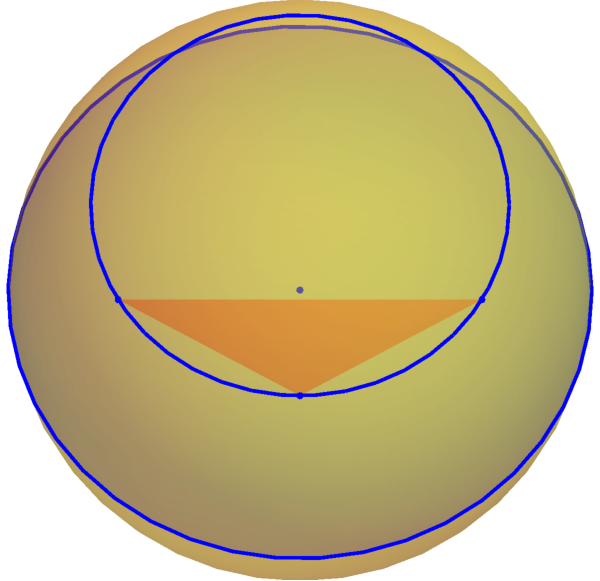}} circle (0pt) ;
\draw[fill] (0,0) node[above right=0pt] {\footnotesize$C$} circle (3pt) ;
\draw[fill] (6.05,-.3) node[right=0pt] {\footnotesize$A$} circle (3pt) ;
\draw[fill] (-0,-3.5) node[below=-2pt] {\footnotesize$B$} circle (3pt) ;
\draw[fill] (-6.05,-.3) node[left=0pt] {\footnotesize$B'$} circle (3pt) ;
\end{tikzpicture}
\end{minipage}
\smallskip

Thus $\gamma=\pl A'B'C'$,
and we conclude that
$j=\sigma^{A'B'C'}\circ\sigma^\gamma\circ\sigma^\beta\circ\sigma^\alpha=\sigma^\beta\circ\sigma^\alpha$, which is a rotation around the
line $\alpha\cap\beta$.

\noindent
\begin{minipage}{.6\textwidth}
\setlength{\parindent}{\myindent}
\indent
It remains to identify $i$.
If $B'=A$, then $\beta=\gamma$ and $i=\sigma^{\bis AB}$ is a
  reflection, so let us assume $B'\neq A$.
Let $D$ be the
midpoint of $\overline{AB}$, let $D'=\fm(D)$ be the midpoint of
$\overline{BB'}$ and let $\delta=\pl CDD'$. To see that $i$ is a
rotary reflection, we shall show that $\fm':=\sigma^\delta\circ i$ is a
rotation around the line $\ell$ through $C$ perpendicular to $\delta$.

Let $\tilde{B}=\sigma^\delta(B)=\fm'(A)$. If $\tilde{B}=B$, we have
$\delta=\gamma$ and we have the desired conclusion, noting that
$i=\sigma^{\gamma}\circ j$.
\end{minipage}\hfill
\begin{minipage}{.35\textwidth}
\centering
\begin{tikzpicture}[scale=.225]
\draw[fill] (0,0) node[red] {\includegraphics[width=4.5cm]{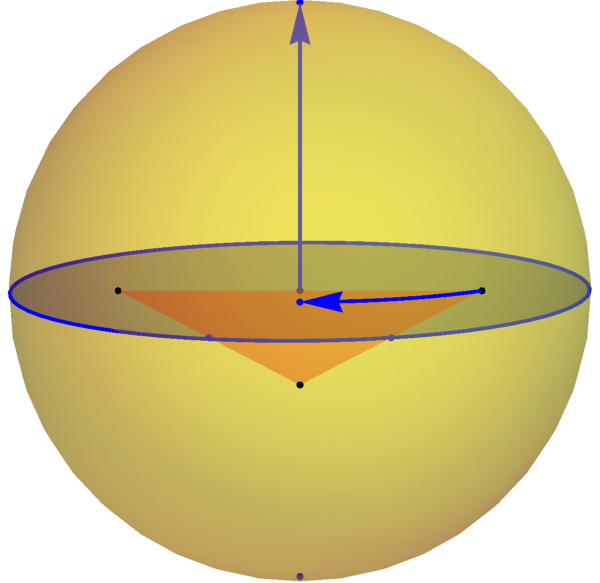}} circle (0pt) ;
\draw[fill] (0,0) node[above right=0pt] {\footnotesize$C$} circle (3pt) ;
\draw[fill] (6.05,0) node[right=0pt] {\footnotesize$A$} circle (3pt) ;
\draw[fill] (-0,-3.2) node[below=-2pt] {\footnotesize$B$} circle (3pt) ;
\draw[fill] (-6.05,0) node[left=0pt] {\footnotesize$B'$} circle (3pt) ;
\draw[fill] (-.15,-0.4) node[left=-2.5pt] {\footnotesize$\tilde{B}$} circle (3pt) ;
\draw[fill] (2.95,-1.55) node[below=-2.5pt] {\footnotesize$D$} circle (3pt) ;
\draw[fill] (-3.1,-1.4) node[below=-2.5pt] {\footnotesize$D'$} circle (3pt) ;
\end{tikzpicture}
\end{minipage}

Assume from now on
that $\tilde{B}\neq B$. Arguing as in the first part of the
proof for $\fm'$ (using that $\fm'(A) = \tilde{B}$, $\fm'(\tilde{B})=B'$ and
$\fm'(C)=C$) we see that there are two possibilities. The first one is
that
\begin{displaymath}
  \fm' = \sigma^{\bis\overline{AB'}}\circ\sigma^{\bis\overline{A\tilde{B}}}.
\end{displaymath}
Now observe that the planes $\bis\overline{AB'}$ and
$\bis\overline{A\tilde{B}}$ are perpendicular to $\pl A\tilde{B}B'$,
and therefore also perpendicular to the parallel plane $\delta$.
Thus $\rho:=\sigma^{\bis\overline{AB'}}\circ\sigma^{\bis\overline{A\tilde{B}}}$
is a rotation around $\ell$, and again we have the desired
conclusion. The second possibility is that 
\begin{displaymath}
  \fm' = \sigma^{C\tilde{B} B'}\circ \rho,
\end{displaymath}
which would imply that
\begin{math}
  \fm'(D) = \sigma^{C\tilde{B}B'}(D').
\end{math}
But, from
  $\fm'(D) = \sigma^\delta\circ\sigma^{C\tilde{B}B'}\circ\sigma^{\bis\overline{AB'}}\circ\sigma^{\bis\overline{A\tilde{B}}}(D)$, it is easy to check that
\begin{math}
  \fm'(D)=D'.
\end{math}
Thus we would have
$D'\in\pl C\tilde{B}B' \iff \tilde{B}\in\pl CD'B'=\pl CBB'$,
implying $B=\tilde{B}$, contrary to hypothesis.
\end{proof}
\subsection{Classification of $3$-isometries}

In the next corollary we complete the classification of rigid
  motions in space and for this we remind the reader that, given two
  points $A$ and $B$, the translation $\tau^{AB}$ which sends $A$ to
  $B$ can be constructed as follows: let $\sigma'$ be reflection in
  the plane through $B$ parallel to $\bis\overline{AB}$. Then
  $\tau^{AB}=\sigma'\circ\sigma^{\bis\overline{AB}}$. Recall that for
  points $A$, $B$ and $C$, one has
  $\tau^{BC}\circ\tau^{AB}=\tau^{AC}$.
\begin{lemma*}
Let $\rho$ be a non-trivial rotation about a line $\ell$ and let $\tau$ be a
translation in a direction perpendicular to $\ell$. Then the
composition $\tau\circ\rho$ is a rotation about a line parallel to $\ell$.
\end{lemma*}
\begin{proof}
  Write $\tau=\sigma_1\circ\sigma_2$ as the composition of reflections
  in parallel planes $\pi_1$ and $\pi_2$, and
  $\rho=\sigma_3\circ\sigma_4$ as the composition of reflections in
  planes $\pi_3$ and $\pi_4$ which intersect along $\ell$. Note that
  we can take for $\pi_3$ any plane through $\ell$, by changing
  $\pi_4$ accordingly.  By hypothesis $\ell$ is parallel to $\pi_1$
  and $\pi_2$ and thus we may take $\pi_3$ to be parallel to those
  two planes as well. It follows that
  $\sigma_3\circ\sigma_2\circ\sigma_1$ is a reflection $\sigma'$ in
  plane $\pi'$ parallel to $\sigma_3$ and hence
  $\tau\circ\rho=\sigma_4\circ\sigma'$ is a rotation.
\end{proof}

\begin{corollary}
\label{theor:chasles}
Let $\fm$ be a rigid motion in space different from the identity.
\begin{itemize}
\item{\emph{(Mozzi-Chasles)}} If $\fm$ is an orientation preserving
  isometry then $\fm$ is a screw displacement, that is, a
  rotation about a line $\ell$ followed (or preceded) by a (possibly
  trivial) translation in the direction of $\ell$.

\item If $\fm$ is not orientation preserving then $\fm$ is either an
  inversion, or a reflection in a plane, or a rotary reflection, or a
  \emph{glide plane operation}, that is, a reflection in a
  plane $\pi$ followed (or preceded) by a translation in $\pi$.
\end{itemize}
\end{corollary}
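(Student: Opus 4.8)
The plan is to derive this from Theorem~\ref{theor:euler} by peeling off a translation. Write $\tau_w$ for the translation by a vector $w$. Given $\fm\neq\mathrm{id}$, pick any point $P$, set $v=\fm(P)-P$, and let $\fm_0:=\tau_{-v}\circ\fm$; then $\fm_0(P)=\fm(P)-v=P$ and $\fm=\tau_v\circ\fm_0$. Since translations preserve orientation, $\fm_0$ is orientation preserving if and only if $\fm$ is. If $\fm_0=\mathrm{id}$ then $\fm=\tau_v$ with $v\neq0$, a nontrivial translation (the degenerate screw displacement, with trivial rotational part); otherwise we apply Theorem~\ref{theor:euler} to $\fm_0$.

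So assume $\fm_0\neq\mathrm{id}$. By Theorem~\ref{theor:euler}, $\fm_0$ is a rotation about a line $\ell\ni P$, the inversion in $P$, a reflection in a plane $\pi\ni P$, or a rotary reflection about $P$ with mirror $\pi\ni P$ and axis the line through $P$ perpendicular to $\pi$. Except for the inversion, split $v=v_\parallel+v_\perp$: for the rotation and the rotary reflection, $v_\parallel$ parallel to the axis and $v_\perp$ perpendicular to it; for the reflection, $v_\parallel$ parallel to $\pi$ and $v_\perp$ normal to $\pi$. The point is to absorb $v_\perp$ into $\fm_0$ and then to commute $\tau_{v_\parallel}$ past what remains.

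For this I would use two elementary facts. First, the composition of a rotation about a line $\ell$ with a translation by a vector perpendicular to $\ell$ is again a rotation through the same (nonzero) angle, about a line parallel to $\ell$; this is the planar statement that a rotation followed by a translation is a rotation, applied in each plane perpendicular to $\ell$. Second, the composition of the reflection in a plane $\pi$ with a translation by a vector $w$ normal to $\pi$ is the reflection in the plane $\tau_{w/2}(\pi)$; this is a one–dimensional computation. Having absorbed $v_\perp$ this way, the leftover $\tau_{v_\parallel}$ translates along the (new) axis or within the (new) mirror plane, so it commutes with the rotation, respectively reflection, that remains; this commutation is exactly what licenses ``followed (or preceded)'' in the statement.

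Assembling the cases: from a rotation we get $\fm=\tau_{v_\parallel}\circ\rho'$, where $\rho'$ is a rotation about a line $\ell'$ parallel to $\ell$ and $\tau_{v_\parallel}$ translates along $\ell'$ — a screw displacement (a plain rotation when $v_\parallel=0$). From the inversion, $\tau_v$ composed with the inversion in $P$ is the inversion in the point $P+\tfrac12 v$. From a reflection we get $\fm=\tau_{v_\parallel}\circ\sigma^{\pi'}$, where $\pi'$ is parallel to $\pi$ and $v_\parallel$ is parallel to $\pi'$ — a glide plane operation (a plain reflection when $v_\parallel=0$). From a rotary reflection the same manipulation gives $\fm=\rho'\circ\sigma^{\pi'}$, where $\sigma^{\pi'}$ is a reflection in a plane $\pi'$ and $\rho'$ is a rotation about a line meeting $\pi'$ perpendicularly at a point — a rotary reflection about that point. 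These, together with the nontrivial translations from the first step, are precisely the motions listed. The main obstacle is not depth but bookkeeping: running through the four cases for $\fm_0$ and, in each, making the planar and one–dimensional reductions — and the commutation of $\tau_{v_\parallel}$ — precise enough to pin down the axis or plane of the resulting motion.
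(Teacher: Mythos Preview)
Your proof is correct and follows essentially the same route as the paper's: peel off the translation $u=\fm(P)-P$ to obtain a motion with a fixed point, invoke Theorem~\ref{theor:euler}, then decompose $u$ into components parallel and perpendicular to the axis (or mirror plane) and reabsorb them case by case. You are in fact slightly more careful than the paper in explicitly treating the case $\fm_0=\mathrm{id}$ (pure translation) and in isolating the two elementary facts about translating rotations and reflections.
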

\begin{proof}
  In view of Theorem~\ref{theor:euler}, we may assume that $\fm$ has
  no fixed point.  Let us take any point $A$ and define a motion $\fl$ by
    $\fl(X)=\tau^{A'A}\circ\fm(X)$. Note that
    $\fm=\tau^{AA'}\circ\fl$.  Then $\fl(A)=A$ and, by
  Theorem~\ref{theor:euler}, $\fl$ fixes a plane~\footnote{The
    plane $\pi$ is not unique, in general.} $\pi$ which contains $A$.
  Let $A''\in\pi$ be the foot of the perpendicular to $\pi$
    through $A'$.  We now consider each of the possibilities for $\fl$
    given in Theorem~\ref{theor:euler}.
\begin{enumerate}
\item$\ $\\[-34.25pt]
\begin{minipage}{.5\textwidth}
  If $\fl$ is a rotation about a line through $A$, we may take $\pi$
  to be the perpendicular through $A$ to this line, and then, by the
  Lemma, $\tau^{AA''}\circ\fl$ is a rotation about a parallel line.
Hence, $\fm=\tau^{A''A'}\circ\tau^{AA''}\circ\fl$ is a screw
displacement.
\end{minipage}
\begin{minipage}{.425\textwidth}
\centering
\begin{tikzpicture}[scale=.225]
\draw[fill] (0,0) node[red] {\includegraphics[width=4.5cm]{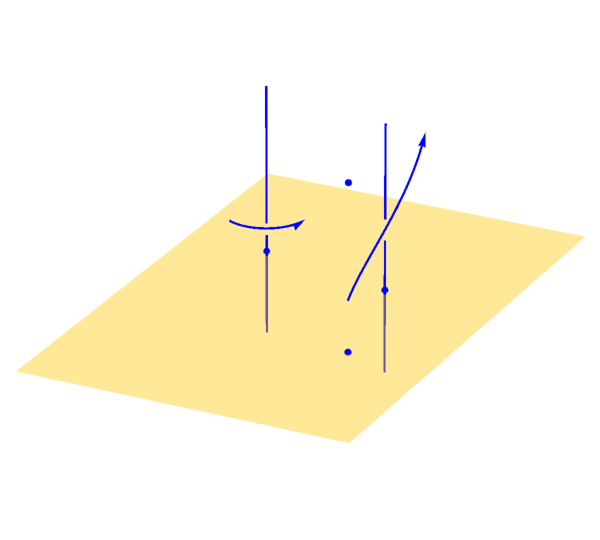}} circle (0pt) ;
\draw[fill] (-1.12,0.633) node[left=-2pt] {\footnotesize$A$} circle (3pt) ;
\draw[fill] (1.62,2.935) node[left=-4pt] {\footnotesize$A'$} circle (3pt) ;
\draw[fill] (1.6,-2.725) node[left=-4pt] {\footnotesize$A''$} circle (3pt) ;
\end{tikzpicture}
\end{minipage}

\item$\ $\\[-35.25pt]
\begin{minipage}{.5\textwidth}
If $\fl$ is a reflection $\sigma^\pi$ in a plane $\pi$
  through $A$, followed by a rotation $\rho^\ell$ about a line $\ell$
  through $A$ perpendicular to $\pi$, then the plane
    $\pi'=\bis\overline{A'A''}$ is fixed by $\fm$. Hence, replacing $A$ by
    a point in $\pi'$, we may assume $A''=A'$.
\end{minipage}
\begin{minipage}{.425\textwidth}
\centering
\begin{tikzpicture}[scale=.225]
\draw[fill] (0,0) node[red] {\includegraphics[width=4.5cm]{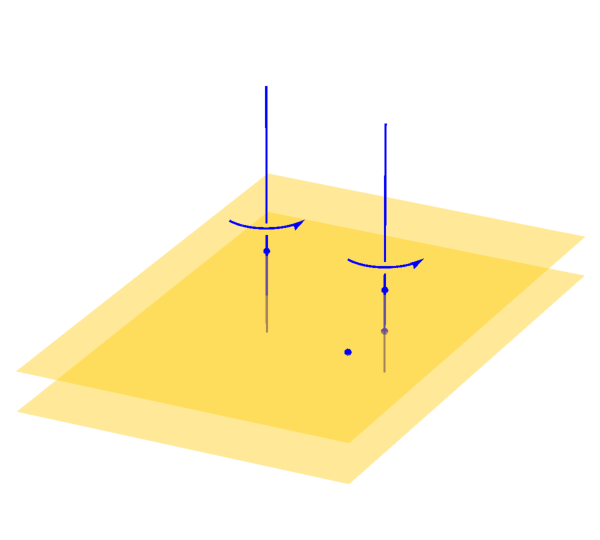}} circle (0pt) ;
\draw[fill] (-1.12,0.633) node[left=-2pt] {\footnotesize$A$} circle (3pt) ;
\draw[fill] (1.6,-2.725) node[left=-4pt] {\footnotesize$A'$} circle (3pt) ;
\draw[fill] (2.83,-2) node[right=-2pt] {\footnotesize$B$} circle (3pt) ;
\end{tikzpicture}
\end{minipage}
\end{enumerate}

Now, If $\rho^\ell$ is not the identity, then
$\tau^{AA'}\circ\rho^{\ell}$ has a fixed point $B\in\pi$ by the Lemma,
and so $\fm=(\tau^{AA'}\circ\rho^{\ell})\circ\sigma^\pi$ also fixes
$B$, contrary to hypothesis. We conclude that
$\fm=\tau^{AA'}\circ\sigma^\pi$, which is a glide plane operation.

Note that in Case~(1) $\fm$ is orientation preserving, while in
Case~(2) it is orientation reversing.
\hfill\qedhere
\end{proof}

\paragraph{\textbf{Acknowledgements.}} 
The authors were partially supported by Centro de Matem\'atica da Universidade do Porto (CMUP), which is financed by national funds through Funda\c{c}\~{a}o para a Ci\^{e}ncia e a Tecnologia (FCT) within project
UIDB/00144/2020.

\end{document}